\newtheorem{thm}{Theorem}
\newtheorem{prop}{Proposition}
\newtheorem{defi}{Definition}
\newtheorem{rem}{Remark}
\newtheorem{lem}{Lemma}
\newcommand{\ai}{\operatorname{Ai}}
\newcommand{\ram}{\operatorname{A}}
\newcommand{\res}{\operatorname{Res}}
\begin{document}
\title{On local solutions of the Ramanujan equation and their connection formulae}
\author{Takeshi MORITA\thanks{Graduate School of Information Science and Technology, Osaka University, 
1-1  Machikaneyama-machi, Toyonaka, 560-0043, Japan.} }
\date{}
\maketitle
\begin{abstract}
We show connection formulae of local solutions of the Ramanujan equation between the origin and the infinity. These solutions are given by the Ramanujan function, the $q$-Airy function and the divergent basic hypergeometric series ${}_2\varphi_0(0,0;-;q,x)$. We use two different $q$-Borel-Laplace transformations to obtain our connection formulae. 
\end{abstract}
\section{Introduction}
In this papar, we show two essentially different connection formulae of some basic hypergeometric series between the origin and the infinity. In 1846, E.~Heine \cite{hein} introduced the basic hypergeometric series ${}_2\varphi_1(a,b;c;q,x)$ as follows;
\begin{equation}
{}_2\varphi_1(a,b;c;q,x):=\sum_{n\ge 0}\frac{(a,b;q)_n}{(c;q)_n(q;q)_n}x^n,\quad c\not \in q^{-\mathbb{N}}.
\label{heine}
\end{equation}
Here, $(a;q)_n$ is the $q$-shifted factorial;
\[(a;q)_n:=
\begin{cases}
1, &n=0, \\
(1-a)(1-aq)\dots (1-aq^{n-1}), &n\ge 1,
\end{cases}
\]
moreover, $(a;q)_\infty :=\lim_{n\to \infty}(a;q)_n$ and 
\[(a_1,a_2,\dots ,a_m;q)_\infty:=(a_1;q)_\infty (a_2;q)_\infty \dots (a_m;q)_\infty.\]
The $q$-shifted factorial $(a;q)_n$ is a $q$-analogue of the shifted factorial $(\alpha )_n$;
\[(\alpha )_n:=
\begin{cases}1, &n=0,\\
\alpha (\alpha +1)\dots \{\alpha +(n-1)\}, &n\ge 1.
\end{cases}\]
The basic hypergeometric series \eqref{heine} is a $q$-analogue of the hypergeometric series ${}_2F_1(\alpha ,\beta ;\gamma ,z)$\cite{Gauss13};
\begin{equation}
{}_2F_1(\alpha ,\beta ;\gamma ,z):=\sum_{n\ge 0}\frac{(\alpha )_n(\beta )_n}{(\gamma )_nn!}z^n.\label{gauss}
\end{equation}
This series \eqref{gauss} has the following famous degeneration diagram
\begin{equation}
	\begin{picture}(370,70)(0,0)
        \put(35,30){{  Gauss } }
        \put(120,30){{  Kummer }}
        \put(205,0){{  Bessel}}
        \put(205,65){{  Weber}}
        \put(270,30){{  Airy}}
        \put(80,32){\vector(1,0){38}}
        \put(180,36){\vector(2,1){38}}
        \put(180,30){\vector(2,-1){38}}
        \put(225,10){\vector(2,1){38}}
        \put(225,56){\vector(2,-1){38}}
     \end{picture}\label{digconti}
\end{equation}

Recently, Y.~Ohyama  \cite{Ohyama} shows that there exists ``the digeneration diagram'' of Heine's series \eqref{heine} as follows:
\[
	\begin{picture}(370,70)(0,0)
        \put(20,30){{  $ _2 \varphi_1(a,b;c;z)$ } }
        \put(110,30){{ $ _1 \varphi_1(a;c;z)$   }}
        \put(200,5){{ $ _1 \varphi_1(a;0;z)$   }}
        \put(205,55){{ $J_\nu^{(3)}$  }}
        \put(205,30){{$J_\nu^{(1)},J_{\nu}^{(2)} $}}
        \put(270,55){{  $q$-Airy}}
        \put(270,18){{ Ramanujan}}
        \put(90,32){\vector(1,0){20}}
        \put(170,36){\vector(3,2){27}}
        \put(170,32){\vector(1,0){27}}
        \put(230,55){\vector(1,0){27}}
        \put(170,28){\vector(3,-2){27}}
        \put(260,8){\vector(2,1){12}}
        \put(244,32){\vector(3,-1){27}}
     \end{picture}
\]
We remark that there exist three different $q$-Bessel functions $J_\nu^{(j)},j=1,2,3$\cite{GR} and two $q$-analogues of the Airy function. In this point, this diagram is essentially different from the diagram \eqref{digconti}. 

Ismail has pointed out that the Ramanujan function is one of $q$-analogues of the Airy function \cite{Is}.  The Ramanujan function appears in the third identity on p.$57$ of Ramanujan's ``Lost notebook'' \cite{Ramanujan} as follows (with $x$ replaced by $q$):
\[\ram_q(-a)=\sum_{n\ge 0}\frac{a^nq^{n^2}}{(q;q)_n}=
\prod_{n\ge 1}\left(1+\frac{aq^{2n-1}}{1-q^ny_1-q^{2n}y_2-q^{3n}y_3-\cdots }\right)\]
where
\begin{align*}
y_1&=\frac{1}{(1-q)\psi^2(q)},\\
y_2&=0,\\
y_3&=\frac{q+q^3}{(1-q)(1-q^2)(1-q^3)\psi^2(q)}
-\frac{\sum_{n\ge 0}\frac{(2n+1)q^{2n+1}}{1-q^{2n+1}}}{(1-q)^3\psi^6(q)},\\
y_4&=y_1y_3,\\
\psi (q)&=\sum_{n\ge 0}q^{\frac{n(n+1)}{2}}=\frac{(q^2;q^2)_\infty}{(q;q^2)_\infty}.\\
\end{align*}
To be precise, the Ramanujan function is given by
\[A_q(x):=\sum_{n\ge 0}\frac{q^{n^2}}{(q;q)_n}(-x)^n.\]
This function satisfies the following second order linear $q$-difference equation;
\begin{equation}
qxu(q^2x)-u(qx)+u(x)=0.
\label{ramanujaneq}
\end{equation}
The equation \eqref{ramanujaneq} has another solution which is given by a divergent series
\[\theta_q(x){}_2\varphi_0\left(0,0;-;q,-\frac{x}{q}\right)
=\theta_q(x)\sum_{n\ge 0}\frac{1}{(q;q)_n}\left\{(-1)^nq^{\frac{n(n-1)}{2}}\right\}^{-1}\left(-\frac{x}{q}\right)^n.\]
Here, $\theta_q(\cdot )$ is the theta function of Jacobi (see the section 2).

An asymptotic formula for the Ramanujan function is obtained by M.~E.~H.~Ismail and C.~Zhang as follows\cite{IZ};
\begin{equation}
\ram_q(x)=\frac{(qx,q/x;q^2)_\infty}{(q;q^2)_\infty}{}_1\varphi_1\left(0;q;q^2,\frac{q^2}{x}\right)-\frac{q(q^2x,1/x;q^2)_\infty}{(1-q)(q;q^2)_\infty}{}_1\varphi_1\left(0;q^3;q^2,\frac{q^3}{x}\right).
\label{Is-Zh}
\end{equation}
From the viewpoint of connection problems on $q$-difference equations, we can regard the formula \eqref{Is-Zh} as one of connection formulae of the Ramanujan function.

The other $q$-analogue of the Airy function is known as the $q$-Airy function $\ai_q(\cdot )$. The $q$-Airy function has found in the study of the second $q$-Painlev\'e equation\cite{hama}. The function $\ai_q(\cdot )$ is defined by
\[\ai_q(x):=\sum_{n\ge 0}\frac{1}{(-q;q)_n(q;q)_n}\left\{(-1)^nq^{\frac{n(n-1)}{2}}\right\}(-x)^n\]
and satisfies the following $q$-difference equation
\begin{equation}
u(q^2x)+xu(qx)-u(x)=0.\label{qairy}
\end{equation}
The other solution of the equation \eqref{qairy} around the origin is given by 
\[u(x)=\frac{\theta_q(q^2x)}{\theta_q(-q^2x)}\ai_q(-x).\]
Ismail also has pointed out the Ramanujan function and the $q$-Airy function are different. But the relation between them has not known. In the section 3, we give the connection formula between these functions with using the $q$-Borel-Laplace transformations of the second kind.

\noindent
\textbf{Theorem }\textit{For any }$x\in\mathbb{C}^*$, we have
\[\ram_{q^2}\left(-\frac{q^3}{x^2}\right)
=
\frac{1}{(q,-1;q)_\infty}\left\{\theta\left(\frac{x}{q}\right)\ai_q(-x)
+\theta \left(-\frac{x}{q}\right)\ai_q (x)\right\}.\]

Connection problems on linear $q$-difference equations between the origin and the infinity are studied by G.~D.~Birkhoff \cite{Birkhoff}. The first example of the connection formula was found by G.~N.~Watson \cite{W} in 1912. This formula is known as ``Watson's formula for ${}_2\varphi_1(a,b;c;q,x)$'' as follows \cite{GR};
 \begin{align}\label{wato}
{}_2 \varphi_1\left(a,b;c;q;x \right)= 
\frac{(b,c/a;q)_\infty (a x,q/ a x;q)_\infty }{(c, b/a;q)_\infty (  x,q/   x;q)_\infty } 
{}_2 \varphi_1\left(a,aq/c;aq/b;q;cq/abx \right) \nonumber \\+ 
\frac{(a,c/b;q)_\infty (b x, q/ b x;q)_\infty }{(c, a/b;q)_\infty (  x,q/   x;q)_\infty } 
{}_2 \varphi_1\left(b, bq/c; bq/a; q; cq/abx \right).
\end{align}
But other connection formulae had not found for a long time. Recently, C.~Zhang gives connection formulae for some confluent type basic hypergeometric series \cite{Z0,Z1,Z2}. In \cite{Z1}, Zhang gives a connection formula of Jackson's first and second $q$-Bessel function $J_\nu^{(j)}(x;q), (j=1,2)$;
\[J_\nu^{(1)}(x;q):=\frac{(q^{\nu +1};q)_\infty}{(q;q)_\infty}\left(\frac{x}{2}\right)^\nu\sum_{n\ge 0}\frac{1}{(q^{\nu +1};q)_n}\left(-\frac{x^2}{4}\right)^n\]and
\[J_\nu^{(2)}(x;q):=\frac{(q^{\nu +1};q)_\infty}{(q;q)_\infty}\left(\frac{x}{2}\right)^\nu\sum_{n\ge 0}\frac{q^{n^2}}{(q^{\nu +1};q)_n}\left(-\frac{q^\nu x^2}{4}\right)^n\]
with using the $q$-Borel-Laplace transformations of the second kind $\mathcal{B}_q^-$ and $\mathcal{L}_q^-$. These transformations are defined for a formal power series $f(x)=\sum_{n\ge 0}a_nx^n$, $a_0=1$ as follow;
\begin{enumerate}
\item The $q$-Borel transformation of the second kind is 
\[(\mathcal{B}_q^-f)(\xi ):=\sum_{n\ge 0}a_nq^{-\frac{n(n-1)}{2}}\xi^n\left(=:g(\xi )\right).\]
\item The $q$-Laplace transformation of the second kind is
\[\left(\mathcal{L}_q^-g\right)(x):=\frac{1}{2\pi i}\int_{|\xi |=r}g(\xi )\theta_q\left(\frac{x}{\xi }\right)\frac{d\xi}{\xi},\]
where $r>0$ is enough small number. 
\end{enumerate}

In \cite{M0} and \cite{M1}, we obtained connection formulae of the Hahn-Exton $q$-Bessel function
\[
J_\nu^{(3)}(x;q):=\frac{(q^{\nu +1};q)_\infty}{(q;q)_\infty}x^\nu\sum_{n\ge 0}\frac{q^{\frac{n(n+1)}{2}}}{(q^{\nu +1};q)_n}\left(-x^2\right)^n
\]
and the $q$-confluent type basic hypergeometric function 
\[{}_1\varphi_1(a;b;q,x):=\sum_{n\ge 0}\frac{(a;q)_n}{(b;q)_n(q;q)_n}(-1)^nq^{\frac{n(n-1)}{2}}x^n\]
by these transformations. In section 3, we use these transformations to obtain connection formula between the Ramanujan function and the $q$-Airy function. 

On the other hand, the $q$-Borel-Laplace transformations of the first kind are defined for a formal power series as follow;
\begin{enumerate}
\item The $q$-Borel transformation of the first kind is
\[\left(\mathcal{B}_q^+f\right)(\xi ):=\sum_{n\ge 0}a_nq^{\frac{n(n-1)}{2}}\xi^n\left(=:\varphi (\xi )\right).\]
\item The $q$-Laplace transformation of the first kind is
\[\left(\mathcal{L}_q^+\varphi\right)(x):=
\frac{1}{1-q}\int_0^{\lambda\infty}\frac{\varphi (\xi )}{\theta_q\left(\frac{\xi}{x}\right)}\frac{d_q\xi}{\xi}=\sum_{n\in\mathbb{Z}}\frac{\varphi (\lambda q^n)}{\theta_q\left(\frac{\lambda q^n}{x}\right)},\]
here, this transformation is given by Jackson's $q$-integral \cite{GR}.
\end{enumerate}
These two different types of $q$-Borel-Laplace transformations are introduced by J.~Sauloy \cite{sauloy} and stusied by C.~Zhang.
We remark that each $q$-Borel transformation is formal inverse of each $q$-Laplace transformation, i.e.,
\[\mathcal{L}_q^{\pm}\circ\mathcal{B}_q^{\pm}f=f.\]
 The application of the $q$-Borel-Laplace transformations of the first kind is found in \cite{Z0,Z2}. Zhang gives the connection formula of the divergent basic hypergeometric series ${}_2\varphi_0(a,b;-;q,x)$ as follows;

\noindent
\textbf{Theorem} (Zhang, \cite{Z0}) 
\textit{For any $x\in\mathbb{C}^*$, we have} 
\begin{align*}
{}_2f_0&(a,b;\lambda ,q,x)\\
=&\frac{(b;q)_\infty}{(b/a;q)_\infty}\frac{\theta_q(a\lambda )}{\theta_q(\lambda )}\frac{\theta_q(qax/\lambda )}{\theta_q(\lambda /x)}{}_2\varphi_1\left(a,0;\frac{aq}{b};q,\frac{q}{abx}\right)\\
+&\frac{(a;q)_\infty}{(a/b;q)_\infty}\frac{\theta_q(b\lambda )}{\theta_q(\lambda )}\frac{\theta_q(qbx/\lambda )}{\theta_q(\lambda /x)}{}_2\varphi_1\left(b,0;\frac{bq}{a};q,\frac{q}{abx}\right)
\end{align*}
\textit{where $\lambda\in\mathbb{C}^*\setminus \{-q^n;n\in\mathbb{Z}\}$.}

Here, ${}_2f_0(a,b;\lambda ,q,x)$ in the left-hand side is the $q$-Borel-Laplace transform of the function ${}_2\varphi_0(a,b;-;q,x)$. But other application of this method (of the first kind ) has not known. In the section 4, we show the connection formula of the divergent series
\[{}_2\varphi_0(a,b;-;q,x)=\sum_{n\ge 0}\frac{1}{(q;q)_n}\left\{(-1)^nq^{\frac{n(n-1)}{2}}\right\}^{-1}x^n.\]
This formula is given by the following theorem;

\noindent
\textbf{Theorem}\textit{ For any $x\in\mathbb{C}^*\setminus [-\lambda ;q]$,} 
\begin{align*}\theta_q(x){}_2f_0\left(0,0;-;q,-\frac{x}{q}\right)&=(q;q)_\infty\frac{\theta_q(x) \theta_{q^2}\left(-\frac{\lambda^2}{qx}\right)}{\theta_q\left(-\frac{\lambda}{q}\right)\theta_q\left(\frac{\lambda}{x}\right)}
{}_1\varphi_1\left(0;q;q^2,\frac{q^2}{x}\right)\\
&+\frac{(q;q)_\infty}{1-q}\frac{\theta_q(x) \theta_{q^2}\left(-\frac{\lambda^2}{x}\right)}{\theta_q\left(-\frac{\lambda}{q}\right)\theta_q\left(\frac{\lambda}{x}\right)}\frac{\lambda}{x}
{}_1\varphi_1\left(0;q^3;q^2,\frac{q^3}{x}\right).
\end{align*} 

\section{Basic notations}
In this section, we review our notations. We assume that $q\in\mathbb{C}^*$ satisfies $0<|q|<1$. The $q$-shifted operator $\sigma_q$ is given by $\sigma_qf(x)=f(qx)$. For any fixed $\lambda\in\mathbb{C}^*\setminus q^{\mathbb{Z}}$, the set $[\lambda ;q]$-spiral is $[\lambda ;q]:=\lambda q^{\mathbb{Z}}=\{\lambda q^k;k\in\mathbb{Z}\}$. The (generalized) basic hypergeometric series ${}_r\varphi_s(a_1,\dots ,a_r;b_1,\dots ,b_s;q,x)$ is 
\begin{align*}
{}_r\varphi_s(a_1,\dots ,a_r&;b_1,\dots ,b_s;q,x)\\
&:=\sum_{n\ge 0}\frac{(a_1,\dots ,a_r;q)_n}{(b_1,\dots ,b_s;q)_n(q;q)_n}\left\{(-1)^nq^{\frac{n(n-1)}{2}}\right\}^{1+s-r}x^n.
\end{align*}
This series has radius of convergence $\infty , 1$ or $0$ according to whether $r-s<1, r-s=1$ or $r-s>1$ (see \cite{GR} for further details). In connection problems, the theta function of Jacobi is important. This function is defined by
\[\theta_q(x):=\sum_{n\in\mathbb{Z}}q^{\frac{n(n-1)}{2}}x^n,\qquad x\in\mathbb{C}^*.\]
We denote $\theta_q(\cdot )$ or more shortly $\theta (\cdot )$. The theta function has the following properties;
\begin{enumerate}
\item Jacobi's triple product identity is
\[\theta_q(x)=\left(q,-x,-\frac{q}{x};q\right)_\infty .\]
\item The $q$-difference equation which the theta function satisfies;
\[\theta_q(q^kx)=q^{-\frac{n(n-1)}{2}}x^{-k}\theta_q(x),\quad \forall k\in\mathbb{Z}.\]
\item The inversion formula;
\[\theta_q\left(\frac{1}{x}\right)=\frac{1}{x}\theta_q(x).\]
\end{enumerate}
We remark that the function $\theta (-\lambda x)/\theta (\lambda x)$, $\lambda\in\mathbb{C}^*$ satisfies a $q$-difference equation
\[u(qx)=-u(x)\]
which is also satisfied by the function $u(x)=e^{\pi i\left(\frac{\log x}{\log q}\right)}$. 


\section{Two types of the $q$-analogue of the Airy function and the connection formula}
There are two different $q$-analogue of the Airy function. One is called the Ramanujan function which appears in \cite{Ramanujan}. Ismail \cite{Is} pointed out that the Ramanujan function can be considered as a $q$-analogue of the Airy function. The other one is called the $q$-Airy function which is obtained by K.~ Kajiwara, T. Masuda, M. Noumi, Y. Ohta and Y. Yamada \cite{KMNOY} . In this section, we see the properties of these functions. We explain the reason why they are called $q$-analogue of the Airy function and we show $q$-difference equations which they satisfy.

\subsection{The Ramanujan function $\ram_q(x)$}
The Ramanujan function appears in Ramanujan's ``Lost notebook''~\cite{Ramanujan}.
Ismail has pointed out that the Ramanujan function can be considered as a $q$-analogue of the Airy function.
The Ramanujan function is defined by following convergent series;
\[\ram_q(x):=\sum_{n\ge 0}\frac{q^{n^2}}{(q;q)_n}(-x)^n
={_0\varphi_1}(-;0;q,-qx).\]

In the theory of ordinary differencial equations, the term Plancherel-Rotach asymptotics refers to asymptotics around the largest and smallest zeros. With $x=\sqrt{2n+1}-2^{\frac{1}{2}}3^{\frac{1}{3}}n^{\frac{1}{6}}t$ and for $t\in\mathbb{C}$, the Plancherel-Rotach asymptotic formula for Hermite polynomials $H_n(x)$ is
\begin{equation}
\lim_{n\to +\infty}\frac{e^{-\frac{x^2}{2}}}{3^{\frac{1}{3}}\pi^{-\frac{3}{4}}2^{\frac{n}{2}+\frac{1}{4}}\sqrt{n!}}H_n(x)=\ai (t). \label{pr}
\end{equation}
In \cite{Is}, Ismail shows the $q$-analogue of (\ref{pr});
\begin{prop}One can get 
\[\lim_{n\to\infty}\frac{q^{n^2}}{t^n}h_n(\sinh\xi_n|q)=\ram_q\left(\frac{1}{t^2}\right)\]
where $e^{\xi_n}=tq^{-\frac{n}{2}}$.
\end{prop}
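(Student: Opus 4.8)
The plan is to read the limit off the terminating $q$-binomial expansion of the continuous $q^{-1}$-Hermite polynomials and then perform one interchange of a limit with a sum. Write $\binom{n}{k}_{q}:=(q;q)_n/\{(q;q)_k(q;q)_{n-k}\}$ for the Gaussian binomial coefficient. From $h_n(x\mid q)=i^{-n}H_n(ix\mid q^{-1})$ for the continuous $q$-Hermite polynomials $H_n$, together with $\binom{n}{k}_{q^{-1}}=q^{-k(n-k)}\binom{n}{k}_{q}$, one has, with $x=\sinh\xi$,
\[
h_n(\sinh\xi\mid q)=\sum_{k=0}^{n}\binom{n}{k}_{q}q^{k(k-n)}(-1)^{k}e^{(n-2k)\xi}.
\]
Everything after this is elementary algebra together with the limit interchange.

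First I would substitute $e^{\xi_n}=t q^{-n/2}$, so that $e^{(n-2k)\xi_n}=t^{\,n-2k}q^{-n(n-2k)/2}$, and then collect in the $k$-th summand the three contributions to the power of $q$: the factor $q^{k(k-n)}$ from the coefficient, the factor $q^{-n(n-2k)/2}$ from the exponential, and the overall normalizing factor $q^{n^{2}}/t^{n}$. These are arranged so that the cross term $nk$ cancels and the term quadratic in $n$ is swallowed by the normalization, leaving
\[
\frac{q^{n^{2}}}{t^{n}}\,h_n(\sinh\xi_n\mid q)=\sum_{k=0}^{n}\binom{n}{k}_{q}q^{k^{2}}\Bigl(-\frac{1}{t^{2}}\Bigr)^{\!k}.
\]
Verifying this identity — i.e.\ carrying out the exponent bookkeeping carefully enough to be sure that no residual factor $q^{cn^{2}}$ with $c\neq0$ is left outside the sum, which is exactly what pins down the constant $q^{n^{2}}$ in the statement — is the one place where real care is needed.

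Next I would pass to the limit term by term. Since $0<|q|<1$ forces $(q;q)_m\to(q;q)_\infty\neq0$, for each fixed $k$
\[
\binom{n}{k}_{q}=\frac{(q;q)_n}{(q;q)_k(q;q)_{n-k}}\ \xrightarrow[n\to\infty]{}\ \frac{(q;q)_\infty}{(q;q)_k(q;q)_\infty}=\frac{1}{(q;q)_k},
\]
so the $k$-th summand converges to $q^{k^{2}}(-1/t^{2})^{k}/(q;q)_k$, and the sum over $k\ge0$ of these limits is, by the very definition of the Ramanujan function, $\ram_q(1/t^{2})$.

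The hard part will be the exchange of $\lim_{n\to\infty}$ with $\sum_{k=0}^{n}$, whose range grows with $n$; I would handle it with a uniform majorant. Writing $\binom{n}{k}_{q}=(q^{\,n-k+1};q)_k/(q;q)_k$ and using $|1-q^{j}|\le 1+|q|^{j}$, one gets, for all $n$ and all $0\le k\le n$,
\[
\Bigl|\binom{n}{k}_{q}\Bigr|\ \le\ \frac{\prod_{j\ge1}(1+|q|^{j})}{\inf_{m\ge0}|(q;q)_m|}\ =:\ C\ <\ \infty ,
\]
while for every fixed $t\in\mathbb{C}^{*}$ the series $\sum_{k\ge0}C\,|q|^{k^{2}}|t|^{-2k}$ converges because the Gaussian factor $|q|^{k^{2}}$ dominates $|t|^{-2k}$. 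Thus the $k$-th summand of $q^{n^{2}}t^{-n}h_n(\sinh\xi_n\mid q)$ is bounded, uniformly in $n$, by a fixed summable sequence, and dominated convergence for series gives $q^{n^{2}}t^{-n}h_n(\sinh\xi_n\mid q)\to \ram_q(1/t^{2})$. The convergence estimate is routine; it is the exponent bookkeeping of the second step that one must not get wrong.
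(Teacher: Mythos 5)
Your overall route --- expand $h_n$ through the terminating sum for the $q^{-1}$-Hermite polynomials, substitute $e^{\xi_n}=tq^{-n/2}$, and pass to the limit term by term under a dominated-convergence majorant --- is the natural one; note the paper itself states this proposition as imported from Ismail \cite{Is} without reproducing a proof, and the argument there is of exactly this type. The limit-interchange part of your write-up is sound: $\binom{n}{k}_q\to 1/(q;q)_k$ for fixed $k$, your uniform bound $\bigl|\binom{n}{k}_q\bigr|\le C$ is correct since $(q^{n-k+1};q)_k$ is bounded by $\prod_{j\ge1}(1+|q|^j)$ and $\inf_m|(q;q)_m|>0$, and $\sum_k C|q|^{k^2}|t|^{-2k}<\infty$ for every fixed $t\neq0$.

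The difficulty is precisely the step you flag as ``the one place where real care is needed'': with the expansion you start from,
\[
h_n(\sinh\xi\mid q)=\sum_{k=0}^{n}\binom{n}{k}_q q^{k(k-n)}(-1)^k e^{(n-2k)\xi},
\]
the substitution $e^{\xi_n}=tq^{-n/2}$ gives $q^{k(k-n)}e^{(n-2k)\xi_n}=t^{\,n-2k}q^{\,k^2-\frac{n^2}{2}}$, so the exact identity reads
\[
\frac{q^{\frac{n^2}{2}}}{t^n}\,h_n(\sinh\xi_n\mid q)=\sum_{k=0}^{n}\binom{n}{k}_q q^{k^2}\Bigl(-\frac{1}{t^2}\Bigr)^{\!k},
\]
with prefactor $q^{n^2/2}$, not $q^{n^2}$: the quadratic term is \emph{not} absorbed by the normalization $q^{n^2}/t^n$. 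Consequently the identity you display is false for the normalization $h_n(x\mid q)=i^{-n}H_n(ix\mid q^{-1})$ that you adopt, and with the stated prefactor $q^{n^2}$ the left-hand side would carry a surviving factor $q^{n^2/2}$ and tend to $0$ rather than to $\ram_q(1/t^2)$. So either the exponent in the statement should be $n^2/2$ (or $\xi_n$ scaled differently), or you must work with a $q^{-1}$-Hermite normalization differing from yours by a Gaussian factor; in any case you need to fix the convention actually used in \cite{Is} and carry out the exponent bookkeeping explicitly rather than asserting it, because as written the central identity on which the whole proof rests does not hold.
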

Here, $h_n(\cdot |q)$ is the $q$-Hermite polynomial. 
In this sense, we can deal with the Ramanujan function $\ram_q(x)$ as a $q$-analogue
of the Airy function. The Ramanujan function satisfies the following $q$-diference equation;
\begin{equation}
\left(qx\sigma_q^2-\sigma_q+1\right)u(x)=0.
\label{ram}
\end{equation}

\begin{rem}We remark that another solution of the equation (\ref{ram}) is given by
\[u(x)=\theta (x){}_2\varphi_0(0,0;-;q,-x/q).\]
Here, 
\[{}_2\varphi_0\left(0,0;-;q,-\frac{x}{q}\right)=\sum_{n\ge 0}\frac{1}{(q;q)_n}\left\{(-1)^nq^{\frac{n(n-1)}{2}}\right\}^{-1}\left(-\frac{x}{q}\right)^n\]
is a divergent series.
\end{rem}

\subsection{The $q$-Airy function $\ai_q(x)$}
The $q$-Airy function is found by K. Kajiwara, T. Masuda, M. Noumi, Y. Ohta and Y. Yamada \cite{KMNOY}, in their study of the $q$-Painlev\'e equations. This function is the special solution of the second $q$-Painlev\'e equations and given by the following series
\[\ai_q(x):=\sum_{n\ge 0}\frac{1}{(-q,q;q)_n}\left\{(-1)^nq^\frac{n(n-1)}{2}\right\}(-x)^n={}_1\varphi_1(0;-q;q,-x).\]

T. Hamamoto, K. Kajiwara, N. S. Witte \cite{hama} proved following asymptotic expansions;

\begin{prop}
With $q=e^{-\frac{\delta^3}{2}}$, $x=-2ie^{-\frac{s}{2}\delta^2}$ as $\delta\to 0$,
\[{_1\varphi_1}(0;-q;q,-qx)=2\pi^{\frac{1}{2}}\delta^{-\frac{1}{2}}
e^{-\left(\frac{\pi i}{\delta^3}\right)\ln 2+\left(\frac{\pi i}{2\delta }\right)s+\frac{\pi i}{12}}\left[\ai\left(se^{\frac{\pi i}{3}}\right)+O(\delta^2)\right],\]
\[{_1\varphi_1}(0;-q;q,qx)=2\pi^{\frac{1}{2}}\delta^{-\frac{1}{2}}
e^{-\left(\frac{\pi i}{\delta^3}\right)\ln 2-\left(\frac{\pi i}{2\delta }\right)s-\frac{\pi i}{12}}\left[\ai\left(se^{-\frac{\pi i}{3}}\right)+O(\delta^2)\right]\]
for $s$ in any compact domain of $\mathbb{C}$.  
\end{prop}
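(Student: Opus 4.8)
The plan is to read off the limiting equation from the confluence of the $q$-difference equation satisfied by ${}_1\varphi_1(0;-q;q,-qx)$, and then to pin down the precise solution and the oscillatory prefactor by a steepest-descent (coalescing-saddle) analysis of the defining series. I will carry out the argument for the first formula; the second follows by the reflection $i\mapsto -i$ (equivalently $-qx\mapsto qx$), which sends the relevant saddle $t_0=-i\pi/2$ to $+i\pi/2$ and conjugates all the constants, turning $\ai(se^{\pi i/3})$ into $\ai(se^{-\pi i/3})$.

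\emph{Step 1 (confluence of the equation).} Since $(0;q)_n=1$, the series reads $w(x):={}_1\varphi_1(0;-q;q,-qx)=\sum_{n\ge0}c_nx^n$ with $c_n=q^{n(n+1)/2}/[(-q;q)_n(q;q)_n]$. From $c_{n+1}/c_n=q^{n+1}/(1-q^{2(n+1)})$ one obtains the second-order $q$-difference equation $w(q^2x)+qx\,w(qx)-w(x)=0$. Now substitute $q=e^{-\delta^3/2}$ and $x=-2ie^{-s\delta^2/2}$; then $x\mapsto qx$ is exactly $s\mapsto s+\delta$, so writing $w=P(s)F(s)$ with $P(s)=2\pi^{1/2}\delta^{-1/2}e^{-(\pi i/\delta^3)\ln2+(\pi i/2\delta)s+\pi i/12}$ and using $P(s+\delta)=iP(s)$, $P(s+2\delta)=-P(s)$, the equation becomes $F(s+2\delta)-2e^{-\delta^3/2-s\delta^2/2}F(s+\delta)+F(s)=0$. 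A Taylor expansion in $\delta$ annihilates the orders $\delta^0,\delta^1$ identically and leaves $\delta^2(F''+sF)+O(\delta^3)=0$, so any limit of $F$ satisfies $F''+sF=0$. Since $z=se^{\pi i/3}$ turns $\ai''(z)=z\ai(z)$ into $F''=-sF$, the functions $\ai(se^{\pi i/3})$ and $\ai(se^{-\pi i/3})$ span the solution space, and the limit is a combination of these rotated Airy functions; the $s$-dependent factor of $P$ is exactly what makes this reduction work, while the $\delta$-dependent amplitude $2\pi^{1/2}\delta^{-1/2}e^{\pi i/12}$ is fixed in Step 2.

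\emph{Step 2 (identifying the solution and the constant).} To select the combination and fix the amplitude I will evaluate the sum by steepest descent. Writing $q=e^{-\epsilon}$, $\epsilon=\delta^3/2$, and approximating $\log(q;q)_n$ and $\log(-q;q)_n$ by their Euler--Maclaurin / dilogarithm values gives $\sum_n T_n\approx\sum_n e^{\Phi(\epsilon n)/\epsilon}$ with $\Phi(t)=-\tfrac12t^2+t\log x+\tfrac{\pi^2}{12}-\tfrac12\operatorname{Li}_2(e^{-2t})$. The saddle condition $\Phi'(t)=-t+\log x-\log(1-e^{-2t})=0$ reads $x=2\sinh t$; at $x=-2i$ it has the root $t_0=-i\pi/2$, where $\Phi''(t_0)=-\coth t_0=0$ and $\Phi'''(t_0)=1/\sinh^2 t_0=-1$. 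The vanishing of $\Phi''$ is precisely the coalescing (turning-point) configuration that produces Airy behaviour. The perturbation $\log x=\log(-2i)-s\delta^2/2$ makes $\Phi'(t_0)=-s\delta^2/2$, and with the scaling $t-t_0=\epsilon^{1/3}\zeta$ the local integrand becomes $\exp(\,\text{cubic in }\zeta+\text{const}\cdot s\,\zeta\,)$, i.e. an Airy integral whose argument is proportional to $s$ with rotation $e^{\pi i/3}$. Finally $\Phi(t_0)/\epsilon$ supplies the exponents of $P$: the $\ln2$ part of $t_0\log(-2i)$ gives $e^{-(\pi i/\delta^3)\ln2}$, its linear-in-$s$ part gives $e^{(\pi i/2\delta)s}$, and the Gaussian-to-Airy normalisation together with the constant $\tfrac{\pi^2}{12}-\tfrac12\operatorname{Li}_2(\cdot)$ yields the amplitude $2\pi^{1/2}\delta^{-1/2}$ and phase $e^{\pi i/12}$.

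The main obstacle is the rigorous, uniform coalescing-saddle analysis. One must (i) justify replacing the sum by a contour integral (Poisson summation, or a Watson / Mellin--Barnes representation of ${}_1\varphi_1$) and analytically continue the summand to complex $n$; (ii) control the dilogarithm approximation to $(q;q)_n$ and $(-q;q)_n$ uniformly in $n$, including the subleading modular correction responsible for the exact factors $\delta^{-1/2}$, $2\pi^{1/2}$ and $e^{\pi i/12}$; and (iii) deform onto the steepest-descent contour through the two merging saddles near $\pm i\pi/2$ and apply a Chester--Friedman--Ursell uniform expansion, checking that all other saddles and $2\pi i$-periodic images are exponentially subdominant, so that the error is genuinely $O(\delta^2)$ uniformly for $s$ in compact sets.
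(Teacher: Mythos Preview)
The paper does not prove this proposition at all: it is quoted verbatim from Hamamoto, Kajiwara and Witte \cite{hama} and no argument is given here. There is therefore no ``paper's own proof'' to compare your proposal against.

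That said, your Step~1 is correct and useful: the recurrence gives $w(q^2x)+qx\,w(qx)-w(x)=0$, the substitution $q=e^{-\delta^3/2}$, $x=-2ie^{-s\delta^2/2}$ does turn $x\mapsto qx$ into $s\mapsto s+\delta$, the prefactor $P$ satisfies $P(s+\delta)=iP(s)$, and the expansion of the resulting difference equation indeed yields $F''+sF=0$ at order $\delta^2$, whose solutions are spanned by $\ai(se^{\pm\pi i/3})$. This confluence argument is the right way to identify the form of the answer. Your Step~2, however, is only a heuristic outline: replacing $(q;q)_n$, $(-q;q)_n$ by dilogarithms, replacing the sum by an integral, locating the coalescing saddle at $t_0=-i\pi/2$, and invoking Chester--Friedman--Ursell are all standard moves, but none of them is carried out, and the exact constants $2\pi^{1/2}\delta^{-1/2}$ and $e^{\pi i/12}$ are asserted rather than derived. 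As written this is a plan, not a proof; the actual work (uniform error control in the Euler--Maclaurin step, contour deformation, and the cubic normal form at the turning point) still has to be done, and it is precisely this analysis that the cited reference \cite{hama} supplies.
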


Here, $\ai (\cdot )$ is the Airy function. From this proposition, we can regard the $q$-Airy function as a $q$-analogue of the Airy function.

We can easily check out that the $q$-Airy function satisfies the second order linear $q$-difference equation
\begin{equation}
\left(\sigma_q^2+x\sigma_q-1\right)u(x)=0.
\label{qai}
\end{equation}
Another solution of the equation (\ref{qai}) is given by
\[u(x)=
e^{\pi i\left(\frac{\log x}{\log q}\right)}{_1\varphi_1}(0;-q;q,x)
=e^{\pi i\left(\frac{\log x}{\log q}\right)}\ai_q(-x).\]


\subsection{Covering transformations}
We define a covering transformation of a second order linear $q$-difference equation.
\begin{defi}For a $q$-difference equation
\begin{equation}\label{sh}
a(x)u(q^2x)+b(x)u(qx)+c(x)u(x)=0,
\end{equation}
we define the covering transformation as follows
\[t^2:=x,\quad v(t):=u(t^2),\quad p:=\sqrt{q}.\]
\end{defi}
The covering transform of the equation (\ref{sh}) is given by
\[a(t^2)v(p^2t)+b(t^2)v(pt)+c(t^2)v(t)=0.\]
By the covering transformation, the equation 
\[\left(K\cdot x\sigma_q^2-\sigma_q+1\right)u(x)=0\]
is transformed to
\begin{equation}
\left(K\cdot t^2\sigma_p^2-\sigma_p+1\right)v(t)=0,\label{kram}
\end{equation}
where $K$ is a fixed constant in $\mathbb{C}^*$.

\subsection{The $q$-Airy equation around the infinity}
We consider the behavior of the equation (\ref{qai}) around the infinity. We set $x=1/t$ and $z(t)=u(1/t)$. Then $z(t)$ satisfies 
\[\left(-\sigma_q^2+\frac{1}{q^2t}\sigma_q+1\right)z(t)=0.\]
We set $\mathcal{E}(t)=1/\theta (-q^2t)$ and $f(t)=\sum_{n\ge 0}a_nt^n,\quad a_0=1$. We assume that $z(t)$ can be described as 
\[z(t)=\mathcal{E}(t)f(t)=\frac{1}{\theta (-q^2t)}\left(\sum_{n\ge 0}a_nt^n\right).\]

The function $\mathcal{E}(t)$ has the following property;
\begin{lem} For any $t\in\mathbb{C}^*$,
\[\sigma_q\mathcal{E}(t)=-q^2t\mathcal{E}(t),
\qquad \sigma_q^2\mathcal{E}(t)=q^5t^2\mathcal{E}(t).\]
\end{lem}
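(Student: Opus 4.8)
The plan is to compute both identities by direct substitution, using only the quasi-periodicity of the theta function recorded in Section 2, namely $\theta_q(q^k x)=q^{-k(k-1)/2}x^{-k}\theta_q(x)$ for all $k\in\mathbb{Z}$. Since $\mathcal{E}(t)=1/\theta_q(-q^2 t)$, the action of $\sigma_q$ on $\mathcal{E}$ is governed entirely by the action of $\sigma_q$ on the argument $-q^2 t$ of the theta function, so the whole proof reduces to reading off the appropriate scaling factor.

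For the first identity I would start from $\sigma_q\mathcal{E}(t)=\mathcal{E}(qt)=1/\theta_q(-q^3 t)$ and rewrite the argument as $-q^3 t=q\cdot(-q^2 t)$. Applying the quasi-periodicity relation with $k=1$ and $x=-q^2 t$ gives $\theta_q(-q^3 t)=(-q^2 t)^{-1}\theta_q(-q^2 t)$, since the exponent $-k(k-1)/2$ vanishes for $k=1$. Inverting this yields
\[
\sigma_q\mathcal{E}(t)=\frac{1}{\theta_q(-q^3 t)}=-q^2 t\cdot\frac{1}{\theta_q(-q^2 t)}=-q^2 t\,\mathcal{E}(t),
\]
which is exactly the claimed relation.

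For the second identity there are two equivalent routes, and I would favor the shorter one: simply apply $\sigma_q$ to the first identity. Because $\sigma_q$ multiplies the variable $t$ by $q$, one gets $\sigma_q^2\mathcal{E}(t)=\sigma_q\bigl(-q^2 t\,\mathcal{E}(t)\bigr)=-q^2(qt)\,\sigma_q\mathcal{E}(t)=-q^3 t\cdot(-q^2 t\,\mathcal{E}(t))=q^5 t^2\,\mathcal{E}(t)$. As a cross-check I would also recompute it directly from $\sigma_q^2\mathcal{E}(t)=1/\theta_q(-q^4 t)$ by writing $-q^4 t=q^2\cdot(-q^2 t)$ and using the relation with $k=2$, where the exponent $-k(k-1)/2=-1$ contributes an extra factor of $q^{-1}$; this must reproduce the same $q^5 t^2$ prefactor.

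There is no genuine obstacle here: the statement is a one-line consequence of the theta quasi-periodicity. The only point requiring care is the bookkeeping of signs and of the powers of $q$ coming from the factor $q^{-k(k-1)/2}x^{-k}$ with $x=-q^2 t$, and in particular confirming that the two independent computations of $\sigma_q^2\mathcal{E}$ agree, which serves as a built-in verification that no power of $q$ has been dropped.
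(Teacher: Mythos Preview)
Your proof is correct. The paper actually states this lemma without proof, but your direct computation via the quasi-periodicity relation $\theta_q(q^k x)=q^{-k(k-1)/2}x^{-k}\theta_q(x)$ from Section~2 is exactly the intended one-line verification, and both your iterative and direct computations of $\sigma_q^2\mathcal{E}(t)$ are sound.
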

From this lemma, $f(t)$ satisfies the following equation
\begin{equation}\label{qaeq}
\left(-q^5t^2\sigma_q^2-\sigma_q+1\right)f(t)=0.
\end{equation}
Since (\ref{qaeq}) is the same as (\ref{kram}) for $K=-q^5$, we obtain
\[f(t)={}_0\varphi_1(-;0;q^2,q^5t^2)=\ram_{q^2}(-q^3t^2).\]

We show a connection formula for $f(t)$. In order to obtain a connection formula, we need the $q$-Borel transformation and the $q$-Laplace transformation following Zhang \cite{Z1}.
\subsection{\label{qbql}The $q$-Borel transformation and the $q$-Laplace transformation}
\begin{defi}For $f(t)=\sum_{n\ge 0}a_nt^n$, the $q$-Borel transformation is defined by
\[g(\tau )=\left(\mathcal{B}_q^-f\right)(\tau ):=\sum_{n\ge 0}a_nq^{-\frac{n(n-1)}{2}}\tau^n,\]
and the $q$-Laplace transformation is given by
\[\left(\mathcal{L}_q^-g\right)(t):=\frac{1}{2\pi i}\int_{|\tau |=r}g(\tau )\theta\left(\frac{t}{\tau}\right)\frac{d\tau}{\tau },\qquad 0<r<\frac{1}{|q^2|}.\] 
\end{defi}
 The $q$-Borel transformation can be considered as a formal inverse of the $q$-Laplace transformation.
\begin{lem}For any entire function $f$,
\[\mathcal{L}_q^-\circ\mathcal{B}_q^-f=f.\]
\end{lem}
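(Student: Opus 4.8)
The plan is to verify the identity by a direct term-by-term computation, exploiting the fact that the kernel of $\mathcal{L}_q^-$ is the theta function and that integrating around the circle $|\tau|=r$ acts as a Cauchy projection onto a single Fourier mode. Writing $f(t)=\sum_{n\ge 0}a_nt^n$, the definition gives $g(\tau)=(\mathcal{B}_q^-f)(\tau)=\sum_{n\ge 0}a_nq^{-n(n-1)/2}\tau^n$, while the defining series of the theta function yields $\theta(t/\tau)=\sum_{m\in\mathbb{Z}}q^{m(m-1)/2}t^m\tau^{-m}$. Substituting both series into $(\mathcal{L}_q^-g)(t)=\frac{1}{2\pi i}\int_{|\tau|=r}g(\tau)\theta(t/\tau)\frac{d\tau}{\tau}$ produces a double series whose generic term, after multiplication by $d\tau/\tau$, is a constant multiple of $\tau^{\,n-m-1}\,d\tau$.

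The key elementary fact is the orthogonality relation $\frac{1}{2\pi i}\int_{|\tau|=r}\tau^{k-1}\,d\tau=\delta_{k,0}$, so that only the diagonal terms $m=n$ survive the integration. On the diagonal the two $q$-exponential factors multiply to $q^{-n(n-1)/2}\cdot q^{n(n-1)/2}=1$, and one is left with
\[
(\mathcal{L}_q^-\circ\mathcal{B}_q^-f)(t)=\sum_{n\ge 0}a_n q^{-\frac{n(n-1)}{2}}q^{\frac{n(n-1)}{2}}t^n=\sum_{n\ge 0}a_nt^n=f(t),
\]
which is exactly the assertion. Thus, once the interchange of summation and integration is justified, the lemma follows immediately.

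The one point requiring care, and the main obstacle, is the legitimacy of integrating the double series term by term, which amounts to uniform convergence of $g(\tau)\theta(t/\tau)$ on the contour $|\tau|=r$. The theta series converges absolutely and uniformly on compact subsets of $\mathbb{C}^*$, hence uniformly in $\tau$ on $|\tau|=r$ for each fixed $t\in\mathbb{C}^*$; so the real issue is the convergence of $g$ on this circle. This forces the radius of convergence of $g=\mathcal{B}_q^-f$ to exceed $r$, i.e. to be at least $1/|q|^2$, which is compatible with the stated range $0<r<1/|q^2|$. For the functions to which we apply the lemma this is automatic: for $f(t)=\ram_{q^2}(-q^3t^2)=\sum_{n\ge 0}q^{2n^2+3n}(q^2;q^2)_n^{-1}t^{2n}$ one computes $g(\tau)=\sum_{n\ge 0}q^{4n}(q^2;q^2)_n^{-1}\tau^{2n}$, whose radius of convergence is exactly $1/|q|^2$, so that the contour $|\tau|=r$ lies strictly inside its disc of convergence. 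Granting this growth condition, $g$ converges uniformly on $|\tau|=r$, the Cauchy product of the two uniformly convergent series converges uniformly there, the interchange of $\int$ and $\sum$ is valid, and the diagonal computation above completes the proof.
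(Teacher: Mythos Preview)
Your argument is correct and is essentially the same as the paper's, which consists of the single sentence ``We can prove this lemma calculating residues of the $q$-Laplace transformation around the origin'': the orthogonality relation $\frac{1}{2\pi i}\int_{|\tau|=r}\tau^{k-1}\,d\tau=\delta_{k,0}$ you use \emph{is} that residue computation. Your treatment is simply more detailed, in particular in flagging the growth condition on $f$ needed so that $\mathcal{B}_q^-f$ actually converges on $|\tau|=r$, a hypothesis the paper's statement ``for any entire function $f$'' glosses over but which you correctly verify for the specific $f$ used in the application.
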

\begin{proof}We can prove this lemma calculating residues of the $q$-Laplace transformation around the origin. 
\end{proof}

The $q$-Borel transformation has following operational relation;
\begin{lem}\label{orqb}
For any $l,m\in\mathbb{Z}_{\ge 0}$,
\[\mathcal{B}_q^-(t^m\sigma_q^l)=q^{-\frac{m(m-1)}{2}}\tau^m\sigma_q^{l-m}\mathcal{B}_q^-.\]
\end{lem}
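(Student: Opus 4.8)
The plan is to verify the operator identity on the monomial basis and then invoke linearity. Both $\mathcal{B}_q^-(t^m\sigma_q^l)$ and $q^{-\frac{m(m-1)}{2}}\tau^m\sigma_q^{l-m}\mathcal{B}_q^-$ are $\mathbb{C}$-linear maps sending a formal power series in $t$ to a formal power series in $\tau$, so it suffices to check that they agree on each monomial $f(t)=t^n$, $n\in\mathbb{Z}_{\ge 0}$. Since an arbitrary $f=\sum_{n\ge 0}a_nt^n$ is a linear combination of such monomials, equality on monomials gives the full identity.

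First I would compute the left-hand side. Reading $t^m\sigma_q^l$ as ``shift, then multiply'', we have $\sigma_q^l t^n=(q^lt)^n=q^{ln}t^n$, so $t^m\sigma_q^l t^n=q^{ln}t^{m+n}$. Applying the definition of $\mathcal{B}_q^-$ then yields
\[\mathcal{B}_q^-\left(t^m\sigma_q^l t^n\right)=q^{ln}\,q^{-\frac{(m+n)(m+n-1)}{2}}\,\tau^{m+n}.\]
Next I would compute the right-hand side. By definition $\mathcal{B}_q^- t^n=q^{-\frac{n(n-1)}{2}}\tau^n$, and since $\sigma_q$ is the invertible dilation $\sigma_q\tau^n=q^n\tau^n$, the power $\sigma_q^{l-m}$ is well defined for every integer $l-m$ and acts by $\sigma_q^{l-m}\tau^n=q^{(l-m)n}\tau^n$. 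Hence
\[q^{-\frac{m(m-1)}{2}}\tau^m\sigma_q^{l-m}\mathcal{B}_q^- t^n=q^{-\frac{m(m-1)}{2}}\,q^{(l-m)n}\,q^{-\frac{n(n-1)}{2}}\,\tau^{m+n}.\]

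The two expressions share the factor $\tau^{m+n}$, so the identity reduces to matching the exponents of $q$, i.e.\ to the elementary arithmetic identity
\[\frac{(m+n)(m+n-1)}{2}=\frac{m(m-1)}{2}+\frac{n(n-1)}{2}+mn,\]
which is the $q$-binomial relation $\binom{m+n}{2}=\binom{m}{2}+\binom{n}{2}+mn$. Substituting it into the left-hand exponent $ln-\frac{(m+n)(m+n-1)}{2}$ turns the latter into $ln-\frac{m(m-1)}{2}-\frac{n(n-1)}{2}-mn$, which is exactly the right-hand exponent $-\frac{m(m-1)}{2}+(l-m)n-\frac{n(n-1)}{2}$. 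This proves equality on monomials, and linearity extends it to all formal power series.

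There is no genuine obstacle here; the computation is short and purely algebraic. The only two points requiring care are the bookkeeping of the three quadratic exponents (handled entirely by the displayed $q$-binomial identity) and the observation that $\sigma_q^{l-m}$ must be interpreted as a possibly negative integer power of the invertible dilation $\sigma_q$. The latter is precisely what explains the shift of the index from $l$ on the left to $l-m$ on the right: each factor of $t$ accumulated before the Borel transform is converted, via the half-integer weight $q^{-\frac{n(n-1)}{2}}$ in the definition of $\mathcal{B}_q^-$, into one unit of $q$-shift together with the scalar correction $q^{-\frac{m(m-1)}{2}}$.
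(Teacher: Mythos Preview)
Your argument is correct: checking the identity on monomials via the elementary relation $\binom{m+n}{2}=\binom{m}{2}+\binom{n}{2}+mn$ and then extending by linearity is exactly the right (and standard) way to verify this operational rule. The paper itself states this lemma without proof, so there is no alternative argument to compare against; your write-up simply fills in the routine verification the author omitted.
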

\subsection{The connection formula of the $q$-Airy function}
Applying the $q$-Borel transformation in \ref{qbql} to the equation (\ref{kram}) and using lemma \ref{orqb}, we obtain the first order $q$-difference equation
\[g(q\tau )=(1+q^2\tau )(1-q^2\tau )g(\tau ).\]
Since $g(0)=1$, $g(\tau )$ is given by an infinite product
\[g(\tau )=\frac{1}{(-q^2\tau ;q)_\infty(q^2\tau ;q)_\infty}\]
which has single poles at
\[\left\{\tau ;\tau =\pm q^{-2-k},\quad \forall k\in\mathbb{Z}_{\ge 0}\right\}.\]
By Cauchy's residue theorem, the $q$-Laplace transform of $g(\tau )$ is 
\begin{align*}
f(t)=&\frac{1}{2\pi i}\int_{|\tau |=r}g(\tau )\theta\left(\frac{t}{\tau}\right)\frac{d\tau}{\tau }\\
=&-\sum_{k\ge 0}\res\left\{g(\tau )\theta\left(\frac{t}{\tau}\right)\frac{1}{\tau };\tau =-q^{-2-k}\right\}\\
&-\sum_{k\ge 0}\res\left\{g(\tau )\theta\left(\frac{t}{\tau}\right)\frac{1}{\tau };\tau =q^{-2-k}\right\}
\end{align*}
where $0<r<r_0:=1/|q^2|$. We can culculate the residue from lemma \ref{lems}.
\begin{lem}\label{lems}For any $k\in\mathbb{N}$, $\lambda\in\mathbb{C}^*$, one can get;
\begin{enumerate}
\item $\res\left\{\dfrac{1}{\left(\tau /\lambda ;q\right)_\infty}\dfrac{1}{\tau}:\tau =\lambda q^{-k}\right\}
=\dfrac{(-1)^{k+1}q^{\frac{k(k+1)}{2}}}{(q;q)_k (q;q)_\infty}$,
\item $\dfrac{1}{(\lambda q^{-k};q)_\infty}=\dfrac{(-\lambda )^{-k}q^{\frac{k(k+1)}{2}}}{(\lambda ;q)_\infty \left(q/\lambda ;q\right)_k},\quad \lambda \not \in q^{\mathbb{Z}}$.
\end{enumerate}
\end{lem}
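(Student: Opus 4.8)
The plan is to treat both statements as elementary manipulations of the $q$-shifted factorial and of a simple-pole residue; neither requires any machinery beyond the definition $(a;q)_\infty=\prod_{n\ge 0}(1-aq^n)$ and the convergence guaranteed by $0<|q|<1$. Both parts ultimately rest on the single rearrangement $1-\lambda q^{-j}=-\lambda q^{-j}(1-q^j/\lambda)$, which converts a factor carrying a negative power of $q$ into a sign, a power of $q$, and a factor of the standard form $1-(\text{something})q^{\ge 0}$. I would prove the two parts independently, since part 2 is a product identity while part 1 is a residue computation, but both use the same collecting step for the accumulated sign $(-1)^k$ and the power $q^{-k(k+1)/2}=q^{-\sum_{j=1}^k j}$.

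For part 2, I would first shift the index in $(\lambda q^{-k};q)_\infty=\prod_{n\ge 0}(1-\lambda q^{n-k})=\prod_{m\ge -k}(1-\lambda q^m)$ and split off the $k$ factors with $m\in\{-k,\dots ,-1\}$ from the tail $\prod_{m\ge 0}(1-\lambda q^m)=(\lambda ;q)_\infty$. Applying the rearrangement to each of the $k$ split factors turns $\prod_{j=1}^k(1-\lambda q^{-j})$ into $(-\lambda )^k q^{-k(k+1)/2}\prod_{j=1}^k(1-q^j/\lambda )$, and the last product is exactly $(q/\lambda ;q)_k$. Thus $(\lambda q^{-k};q)_\infty=(-\lambda )^k q^{-k(k+1)/2}(q/\lambda ;q)_k(\lambda ;q)_\infty$, and taking reciprocals yields the claimed formula; the hypothesis $\lambda\notin q^{\mathbb{Z}}$ guarantees that none of the separated factors vanishes, so the rearrangement and the reciprocal are legitimate.

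For part 1, I would note that $1/(\tau /\lambda ;q)_\infty$ has a simple pole at $\tau =\lambda q^{-k}$ arising solely from the single factor $1-(\tau /\lambda )q^k$, which satisfies $1-(\tau /\lambda )q^k=-(q^k/\lambda )(\tau -\lambda q^{-k})$. Hence the residue of $\frac{1}{\tau (\tau /\lambda ;q)_\infty}$ equals $-(\lambda /q^k)$ divided by $\tau$ times the regularized product $\prod_{n\ge 0,\,n\ne k}(1-(\tau /\lambda )q^n)$, both evaluated at $\tau =\lambda q^{-k}$. At the pole $\tau /\lambda =q^{-k}$, so the regularized product becomes $\prod_{n\ge 0,\,n\ne k}(1-q^{n-k})$, which I would split into its negative-index part $\prod_{j=1}^k(1-q^{-j})$ and its positive-index tail $(q;q)_\infty$. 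The $\lambda =1$ case of the same rearrangement gives $\prod_{j=1}^k(1-q^{-j})=(-1)^k q^{-k(k+1)/2}(q;q)_k$, and assembling the pieces (the value $\tau =\lambda q^{-k}$ cancels the prefactor $\lambda /q^k$ and leaves an overall $-1$) collapses the expression to $\frac{(-1)^{k+1}q^{k(k+1)/2}}{(q;q)_k(q;q)_\infty}$.

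The computation is entirely routine; the only point demanding care is the sign-and-power bookkeeping when the $k$ negative-index factors are pulled apart, where I would use $(-1)^{-k}=(-1)^k$ to reconcile the intermediate $(-1)^{1-k}$ with the stated $(-1)^{k+1}$. I expect no genuine obstacle beyond keeping the accumulated constants straight across the split of the infinite products, and I would state explicitly at the outset that $0<|q|<1$ secures the convergence of $(q;q)_\infty$ and $(\lambda ;q)_\infty$ throughout.
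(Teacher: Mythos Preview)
Your proof is correct: both identities follow, exactly as you outline, from the factorization $1-\lambda q^{-j}=-\lambda q^{-j}(1-q^j/\lambda)$ applied to the $k$ negative-index factors, and your residue computation and sign bookkeeping are accurate. The paper states this lemma without proof and simply uses it in the residue calculation that follows, so there is no argument in the paper to compare against; your elementary derivation is the natural one and would fit seamlessly.
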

Summing up all of residues, we obtain 
\begin{align*}
f(t)&=\frac{\theta (q^2t)}{(q,-1;q)_\infty} {_1\varphi_1}\left(0,-q;q,\frac{1}{t}\right)
+\frac{\theta (-q^2 t)}{(q,-1;q)_\infty} {_1\varphi_1}\left(0,-q;q,-\frac{1}{t}\right).
\end{align*}
We obtain a connection formula for $z(t)=\mathcal{E}(t)f(t)$. Finally, we acquire the following connection formula between the Ramanujan function and the $q$-Airy function.
\begin{thm}
For any $x\in\mathbb{C}^*$,
\[\ram_{q^2}\left(-\frac{q^3}{x^2}\right)
=\frac{1}{(q,-1;q)_\infty}
\left\{\theta \left(\frac{x}{q}\right)\ai_q(-x)+\theta\left(-\frac{x}{q}\right)\ai_q(x)\right\}.\]
\end{thm}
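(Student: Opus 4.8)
The plan is to assemble the theorem from the pieces already developed in the excerpt, tracking the change of variables carefully. I would start from the identity $f(t)={}_0\varphi_1(-;0;q^2,q^5t^2)=\ram_{q^2}(-q^3t^2)$, which was established by applying the covering transformation to the Ramanujan equation and matching with \eqref{kram} for $K=-q^5$. The goal is to show this $f(t)$ equals the right-hand side after the substitution $t=1/x$, so the left side $\ram_{q^2}(-q^3t^2)$ becomes $\ram_{q^2}(-q^3/x^2)$.

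Next, I would derive the connection formula for $f(t)$ itself via the $q$-Borel--Laplace method of the second kind. Applying $\mathcal{B}_q^-$ to \eqref{kram} with $K=-q^5$ and Lemma~\ref{orqb} yields the first-order $q$-difference equation $g(q\tau)=(1+q^2\tau)(1-q^2\tau)g(\tau)$ with $g(0)=1$, whence $g(\tau)=1/\big((-q^2\tau;q)_\infty(q^2\tau;q)_\infty\big)$, with simple poles at $\tau=\pm q^{-2-k}$, $k\in\mathbb{Z}_{\ge 0}$. Then $f=\mathcal{L}_q^- g$ is computed by pushing the contour $|\tau|=r$ outward and summing residues (picking up a global minus sign from orientation), using Lemma~\ref{lems} to evaluate $\res\{(\tau/\lambda;q)_\infty^{-1}\tau^{-1};\tau=\lambda q^{-k}\}$ and the reflection identity for $(\lambda q^{-k};q)_\infty^{-1}$, together with the quasi-periodicity $\theta_q(q^k x)=q^{-k(k-1)/2}x^{-k}\theta_q(x)$ to handle $\theta(t/\tau)$ at $\tau=\pm q^{-2-k}$. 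Collecting the two families of residues and recognizing the resulting sums as ${}_1\varphi_1(0;-q;q,\pm 1/t)$ produces
\[
f(t)=\frac{\theta(q^2 t)}{(q,-1;q)_\infty}{}_1\varphi_1\!\left(0;-q;q,\frac1t\right)+\frac{\theta(-q^2 t)}{(q,-1;q)_\infty}{}_1\varphi_1\!\left(0;-q;q,-\frac1t\right).
\]

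The final step is translation back to the original variable. Since $\ai_q(x)={}_1\varphi_1(0;-q;q,-x)$, we have ${}_1\varphi_1(0;-q;q,1/t)=\ai_q(-1/t)$ and ${}_1\varphi_1(0;-q;q,-1/t)=\ai_q(1/t)$. Setting $t=1/x$ gives ${}_1\varphi_1(0;-q;q,x)=\ai_q(-x)$ and ${}_1\varphi_1(0;-q;q,-x)=\ai_q(x)$, while $\theta(q^2 t)=\theta(q^2/x)$ and $\theta(-q^2 t)=\theta(-q^2/x)$. I would then simplify $\theta(q^2/x)$ using the $q$-difference and inversion properties of $\theta_q$: $\theta(q^2/x)=\theta(q^2\cdot(1/x))=q^{-1}x\,\theta(1/x)=q^{-1}x\cdot x^{-1}\theta(x)$... more carefully, $\theta_q(q^2 y)=q^{-1}y^{-2}\theta_q(y)$, so with $y=1/x$ one gets $\theta(q^2/x)=q^{-1}x^2\theta(1/x)=q^{-1}x\,\theta(x)$, and comparing with $\theta(x/q)=\theta(q^{-1}x)=q^0\cdot x\,\theta(x)/\ldots$ — the precise bookkeeping shows $\theta(q^2/x)$ and $\theta(x/q)$ differ only by the symmetric factor that is absorbed, matching $\theta(x/q)$ up to the identity $\theta(x/q)=\theta(q/x)\cdot(q/x)$ hidden in the inversion formula; analogously $\theta(-q^2/x)$ matches $\theta(-x/q)$ via the same manipulation plus $u(qx)=-u(x)$ for $\theta(-y)/\theta(y)$-type factors.

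The main obstacle I anticipate is precisely this last normalization: verifying that the theta prefactors $\theta(q^2 t)$ and $\theta(-q^2 t)$, after $t\mapsto 1/x$, coincide with $\theta(x/q)$ and $\theta(-x/q)$ as they appear in the statement, which requires careful repeated use of the $q$-difference equation $\theta_q(q^k x)=q^{-k(k-1)/2}x^{-k}\theta_q(x)$ and the inversion formula $\theta_q(1/x)=x^{-1}\theta_q(x)$, being scrupulous about powers of $q$ and $x$. The residue computation itself is the technical heart but is routine given Lemmas~\ref{lems} and~\ref{orqb}; the risk lies in sign and exponent errors when the two residue families at $\tau=q^{-2-k}$ and $\tau=-q^{-2-k}$ are combined, since the factor $(-q^2\tau;q)_\infty$ versus $(q^2\tau;q)_\infty$ in the denominator of $g$ contributes the alternating $(-1)^n$ that converts the series into ${}_1\varphi_1$ with argument $\pm 1/t$. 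Once these are checked, substituting $z(t)=\mathcal{E}(t)f(t)$ is unnecessary for the stated theorem — the theorem is literally the formula for $f(t)$ rewritten in $x=1/t$ — so the proof concludes immediately.
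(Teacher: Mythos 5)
Your proposal follows essentially the same route as the paper: apply $\mathcal{B}_q^-$ to the transformed equation \eqref{kram} with $K=-q^5$, solve the resulting first-order equation for $g(\tau)$, recover $f=\mathcal{L}_q^-g$ by summing the residues at $\tau=\pm q^{-2-k}$ via Lemma \ref{lems}, and then rewrite $f(1/x)=\ram_{q^2}(-q^3/x^2)$ in terms of $\ai_q(\mp x)$. The only point you left slightly loose, the theta normalization, does close as you expect, since $\theta(q^2/x)=q^{-1}x\,\theta_q(x)=\theta(x/q)$ and likewise $\theta(-q^2/x)=\theta(-x/q)$, and you are right that the factor $\mathcal{E}(t)$ is not needed for the statement itself.
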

Here, both $\ram_q(x)$ and $\ai_q(x)$ are defined by convergent series on whole of the complex plain. The connection formula above is valid for any $x\in\mathbb{C}^*$.
\section{Connection formula of the divergent series ${}_2\varphi_0(0,0;-;q,\cdot )$}
In this section, we show a connection formula of the divergent series $ {}_2\varphi_0$. This series appears in the second solution of the Ramanujan equation \eqref{ram}. At first, we review two $q$-exponential functions to consider our connection formula. 
\subsection{Two different $q$-exponential functions}
In this section, we review two different $q$-exponential functions from the viewpoint of the connection problems. One of the $q$-exponential function $e_q(x)$ is given by 
\[e_q(x):={}_1\varphi_0 (0;-;q,x)=\sum_{n\ge 0}\frac{x^n}{(q;q)_n}.\]
The other $q$-exponential function $E_q(x)$ is 
\[E_q(x):={}_0\varphi_0(-;-;q,-x)=\sum_{n\ge 0}\frac{q^{\frac{n(n-1)}{2}}}{(q;q)_n}x^n.\]
The function $e_q(x)$ satisfies the following first order $q$-difference equation
\[\left\{\sigma_q-(1-x)\right\}u(x)=0\]
and $E_q(x)$ satisfies 
\[\left\{(1+x)\sigma_q-1\right\}u(x)=0.\]
The limit $q\to 1-0$ converges the exponential function
\[\lim_{q\to 1-0}e_q\left(x(1-q)\right)=\lim_{q\to 1-0}E_q\left(x(1-q)\right)=e^x.\]
In this sense, these functions considered as $q$-analogues of the exponential function. It is known that there exists the relation between these functions:
\[e_q(x)E_q(-x)=1,\quad e_{q^{-1}}(x)=E_q(-qx).\]
But another relation has not known. We show the connection formula between them and give alternate representation of $e_q(\cdot )$.
\subsection{The connection formula and alternate representation}
At first, we show the following connection formula between $e_q(\cdot )$ and $E_q(\cdot )$. 
\begin{thm}\label{eqEq} For any $x\in\mathbb{C}^*\setminus [1;q]$, 
\[e_q(x)=\frac{(q;q)_\infty}{\theta_q(-x)}E_q\left(-\frac{q}{x}\right)\]
where $|x|<1$.
\end{thm}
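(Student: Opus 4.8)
The plan is to reduce the identity to two classical product evaluations together with Jacobi's triple product, all of which are available in the paper. First I would record the two Euler $q$-exponential identities, each a special case of the $q$-binomial theorem (see \cite{GR}): for $|x|<1$,
\[e_q(x)={}_1\varphi_0(0;-;q,x)=\frac{1}{(x;q)_\infty},\qquad E_q(x)={}_0\varphi_0(-;-;q,-x)=(-x;q)_\infty .\]
The first evaluation requires $|x|<1$ for convergence, which is precisely the hypothesis placed on $x$; the second holds for every $x\in\mathbb{C}$ because $E_q$ is entire. In particular $E_q(-q/x)=(q/x;q)_\infty$, an entire function of $1/x$, so the right-hand side of the asserted formula is a genuine meromorphic function on $\mathbb{C}^*$ whose only possible poles lie on the zero set of $\theta_q(-x)$.

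Next I would expand the theta factor using Jacobi's triple product from Section 2: $\theta_q(-x)=(q,x,q/x;q)_\infty=(q;q)_\infty(x;q)_\infty(q/x;q)_\infty$. From this expression one reads off that $\theta_q(-x)=0$ exactly when $(x;q)_\infty$ vanishes, i.e. $x\in\{1,q^{-1},q^{-2},\dots\}$, or $(q/x;q)_\infty$ vanishes, i.e. $x\in\{q,q^2,\dots\}$; together these amount to $x\in q^{\mathbb{Z}}=[1;q]$, matching the exclusion in the statement and making the division legitimate for the allowed $x$. Substituting the three evaluations into the right-hand side and cancelling the common factors $(q;q)_\infty$ and $(q/x;q)_\infty$ gives
\[\frac{(q;q)_\infty}{\theta_q(-x)}E_q\!\left(-\frac{q}{x}\right)=\frac{(q;q)_\infty}{(q;q)_\infty(x;q)_\infty(q/x;q)_\infty}\,(q/x;q)_\infty=\frac{1}{(x;q)_\infty}=e_q(x),\]
which is the claimed identity.

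There is essentially no obstacle beyond using the hypotheses correctly: $|x|<1$ is exactly what lets one replace $e_q(x)$ by $1/(x;q)_\infty$, and $x\notin[1;q]$ is exactly what guarantees $\theta_q(-x)\neq 0$. I would close with a remark tying this to the method of the paper: by definition $(\mathcal{B}_q^+e_q)(\xi)=\sum_{n\ge 0}q^{n(n-1)/2}\xi^n/(q;q)_n=E_q(\xi)$, so the theorem can equivalently be phrased as the evaluation $(\mathcal{L}_q^+E_q)(x)=\frac{(q;q)_\infty}{\theta_q(-x)}E_q(-q/x)$ of the $q$-Laplace transform of the first kind. Deriving it directly in that form would instead require summing the resulting bilateral theta-series $\sum_{n\in\mathbb{Z}}(-\lambda q^n;q)_\infty/\theta_q\!\big((\lambda/x)q^n\big)$ on the $[\lambda;q]$-spiral, a Ramanujan-type bilateral summation; the product-formula argument above bypasses that computation entirely, which is why I would adopt it.
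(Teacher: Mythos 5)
Your proposal is correct and follows essentially the same route as the paper: both rest on the Euler product evaluations $e_q(x)=1/(x;q)_\infty$ (for $|x|<1$) and $E_q(x)=(-x;q)_\infty$, combined with Jacobi's triple product $\theta_q(-x)=(q,x,q/x;q)_\infty$, after which the identity is a one-line cancellation. Your added details (locating the zeros of $\theta_q(-x)$ on $[1;q]$ and the closing remark that the theorem is the $q$-Borel--Laplace statement $(\mathcal{L}_q^+E_q)(x)$) are accurate but do not change the argument.
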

\begin{proof}The function $e_q(x)$ and $E_q(x)$ have infinite product as follows:
\[e_q(x)=\frac{1}{(x;q)_\infty},\qquad |x|<1\]
and
\[E_q(x)=(-x;q)_\infty .\]
We remark that $e_q(x)$ can be described as
\[e_q(x)=\frac{1}{\theta_q(-x)}\left(q,\frac{q}{x};q\right)_\infty =\frac{(q;q)_\infty}{\theta_q(-x)}E_q\left(-\frac{q}{x}\right)\]
where $|x|<1$. We obtain the conclusion.
\end{proof}
Therefore, these $q$-exponential functions are related by the connection formula between the origin and the infinity. If we replace $x$ by $x/q$, we obtain the following lemma. This is useful to consider the connection problem in the last section. 
\begin{lem}\label{alt}For any $x\in\mathbb{C}^*\setminus [1;q]$, the function $e_q(x/q)$ has the following alternate representation.
\begin{align*}
e_q\left(\frac{x}{q}\right)=
\frac{(q;q)_\infty}{\theta_q\left(-\frac{x}{q}\right)}{}_0\varphi_1\left(-;q;q^2,\frac{q^5}{x^2}\right)
-\frac{(q;q)_\infty}{\theta_q\left(-\frac{x}{q}\right)}\frac{q^2}{(1-q)x}{}_0\varphi_1\left(-;q^3;q^2,\frac{q^7}{x^2}\right).
\end{align*}
\end{lem}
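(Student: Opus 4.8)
The plan is to derive Lemma \ref{alt} directly from Theorem \ref{eqEq} by substituting $x \mapsto x/q$ and then re-expanding the right-hand side in a way that separates the even and odd powers appearing in $E_q$. First I would apply Theorem \ref{eqEq} with $x$ replaced by $x/q$, which is legitimate for $x/q \in \mathbb{C}^* \setminus [1;q]$, i.e. $x \in \mathbb{C}^* \setminus [q;q] = \mathbb{C}^* \setminus [1;q]$ (the spiral is the same set). This yields
\[
e_q\left(\frac{x}{q}\right) = \frac{(q;q)_\infty}{\theta_q\left(-\frac{x}{q}\right)} E_q\left(-\frac{q^2}{x}\right),
\]
where, from the definition, $E_q(-q^2/x) = {}_0\varphi_0(-;-;q,q^2/x) = \sum_{n\ge 0} \frac{q^{n(n-1)/2}}{(q;q)_n}\left(-\frac{q^2}{x}\right)^n$.

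The key step is to split this single sum over $n \ge 0$ into its even part ($n = 2m$) and odd part ($n = 2m+1$) and recognize each piece as a ${}_0\varphi_1$ in base $q^2$. For the even terms, write $n = 2m$: the general term becomes $\frac{q^{m(2m-1)}}{(q;q)_{2m}}\left(\frac{q^2}{x}\right)^{2m}$, and using $(q;q)_{2m} = (q;q^2)_m (q^2;q^2)_m$ together with $(q;q^2)_m = (q;q^2)_m$ appearing as a denominator parameter $b = q$ in base $q^2$, one matches against
\[
{}_0\varphi_1(-;q;q^2,z) = \sum_{m\ge 0} \frac{1}{(q;q^2)_m (q^2;q^2)_m}\left\{(-1)^m q^{m(m-1)}\right\} z^m,
\]
so that the even part equals ${}_0\varphi_1\left(-;q;q^2,\frac{q^5}{x^2}\right)$ after tracking the powers of $q$ and $x$. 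Similarly, for $n = 2m+1$ one factors out the $m=0$ term $-\frac{q^2}{x}$, uses $(q;q)_{2m+1} = (1-q)(q^3;q^2)_m(q^2;q^2)_m$ (equivalently $(q;q^2)_{m+1}(q^2;q^2)_m$ with the $(1-q)$ pulled out), and identifies the remaining sum as ${}_0\varphi_1\left(-;q^3;q^2,\frac{q^7}{x^2}\right)$; the prefactor collects to $-\frac{q^2}{(1-q)x}$.

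The main obstacle I anticipate is purely bookkeeping: correctly tracking the exponent of $q$ arising from $q^{n(n-1)/2}$ at $n = 2m$ and $n = 2m+1$ against the quadratic factor $q^{m(m-1)}$ built into ${}_0\varphi_1(\,\cdot\,;\,\cdot\,;q^2,\cdot)$, and making sure the argument of each ${}_0\varphi_1$ comes out exactly as $q^5/x^2$ and $q^7/x^2$ respectively. Concretely, at $n=2m$ one has $\binom{2m}{2} = m(2m-1) = 2m^2 - m$, and the ${}_0\varphi_1$ in base $q^2$ contributes $(q^2)^{\binom{m}{2}} = q^{m^2-m}$ per unit of argument raised to the $m$; the discrepancy $2m^2 - m - (m^2 - m) = m^2$ must be absorbed into the argument, giving a factor $(q^2)^{m^2}\cdot(\text{stuff})^m$ — wait, that is not linear in $m$, so in fact the matching must instead use that ${}_0\varphi_1(-;b;q^2,z)$ already carries $q^{2\binom{m}{2}} = q^{m^2 - m}$, but $E_q$ carries $q^{2m^2 - m}$, so the ratio is $q^{m^2}$, i.e. one rewrites $q^{2m^2-m} = q^{m^2-m}\cdot q^{m^2} = q^{m^2-m}\cdot (q^{m})^m$ — this is the subtlety, and it resolves because $\binom{2m}{2}$ expands correctly once one is careful: the clean way is to verify the identity termwise, i.e. check that the coefficient of $x^{-2m}$ (resp.\ $x^{-(2m+1)}$) on both sides agree, which reduces everything to elementary manipulation of $q$-shifted factorials. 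Once the two series are identified, the common prefactor $(q;q)_\infty / \theta_q(-x/q)$ factors out of both terms and the stated formula follows. I would close by noting the identity is an equality of meromorphic functions valid on the indicated punctured spiral-complement, inherited from Theorem \ref{eqEq}.
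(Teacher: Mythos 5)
Your overall route is exactly the paper's: apply Theorem \ref{eqEq} with $x$ replaced by $x/q$ to get $e_q(x/q)=\frac{(q;q)_\infty}{\theta_q(-x/q)}E_q\left(-\frac{q^2}{x}\right)$, then split $E_q(-q^2/x)={}_0\varphi_0(-;-;q,q^2/x)$ into even and odd indices and use $(a;q)_{2k}=(a,aq;q^2)_k$. So there is no methodological difference; the issue is in your execution of the identification step.

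The concrete problem is your displayed expansion of the base-$q^2$ series: you write ${}_0\varphi_1(-;q;q^2,z)=\sum_{m\ge 0}\frac{1}{(q;q^2)_m(q^2;q^2)_m}\{(-1)^m q^{m(m-1)}\}z^m$, i.e.\ you include the quadratic factor to the first power. In the paper's convention (Section 2), ${}_r\varphi_s$ with base $p$ carries $\{(-1)^n p^{n(n-1)/2}\}^{1+s-r}$, and for ${}_0\varphi_1$ this exponent is $2$: each term carries $(q^2)^{m(m-1)}=q^{2m^2-2m}$ and no sign. This is exactly why you ran into the ``discrepancy $q^{m^2}$, not linear in $m$'': with your factor $q^{m^2-m}$ the mismatch against $q^{\binom{2m}{2}}=q^{2m^2-m}$ genuinely cannot be absorbed into any fixed argument $z$, and the termwise check you fall back on would fail as stated — the hand-wave ``it resolves because $\binom{2m}{2}$ expands correctly'' does not close this. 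With the correct factor the bookkeeping is clean: for $n=2m$ the leftover is $q^{2m^2-m-(2m^2-2m)}=q^{m}$, which combined with $(q^2/x)^{2m}=q^{4m}x^{-2m}$ gives argument $q^5/x^2$; for $n=2m+1$, after pulling out $-q^2/x$ and $(1-q)$ from $(q;q)_{2m+1}=(1-q)(q^3;q^2)_m(q^2;q^2)_m$, the leftover is $q^{3m}\cdot q^{4m}x^{-2m}$, giving argument $q^7/x^2$ and the prefactor $-\frac{q^2}{(1-q)x}$. So the lemma and your plan are fine, but as written the crucial series identification rests on a wrong definition and an unresolved exponent mismatch; correct the power $1+s-r=2$ and the argument goes through.
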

\begin{proof}From theorem \ref{eqEq}, 
\[{}_1\varphi_0\left(0;-;q,\frac{x}{q}\right)=\frac{(q;q)_\infty}{\theta_q\left(-\frac{x}{q}\right)}E_q\left(-\frac{q^2}{x}\right)=\frac{(q;q)_\infty}{\theta_q\left(-\frac{x}{q}\right)}{}_0\varphi_0\left(-;-;q,\frac{q^2}{x}\right).\]
Here,
\[{}_0\varphi_0\left(-;-;q,\frac{q^2}{x}\right)
=\sum_{k\ge 0}\frac{1}{(q;q)_k}(-1)^kq^{\frac{k(k-1)}{2}}\left(\frac{q^2}{x}\right)^k\]
and we remark that $(a;q)_{2k}=(a,aq;q^2)_k$\cite{GR}. By separating the terms with even and odd $k\ge 0$, we obtain the conclusion.
\end{proof}

\subsection{The connection formula of the series ${}_2\varphi_0(0,0;-;q,\cdot )$}
The aim of this section is to give a proof for the following theorem;
\begin{thm}\label{divergentramanujan}For any $x\in\mathbb{C}^*\setminus [-\lambda ;q],$ 
\begin{align*}\theta_q(x){}_2f_0&\left(0,0;-;q,-\frac{x}{q}\right)=(q;q)_\infty\frac{\theta_q(x) \theta_{q^2}\left(-\frac{\lambda^2}{qx}\right)}{\theta_q\left(-\frac{\lambda}{q}\right)\theta_q\left(\frac{\lambda}{x}\right)}
{}_1\varphi_1\left(0;q;q^2,\frac{q^2}{x}\right)\\
&+\frac{(q;q)_\infty}{1-q}\frac{\theta_q(x) \theta_{q^2}\left(-\frac{\lambda^2}{x}\right)}{\theta_q\left(-\frac{\lambda}{q}\right)\theta_q\left(\frac{\lambda}{x}\right)}\frac{\lambda}{x}
{}_1\varphi_1\left(0;q^3;q^2,\frac{q^3}{x}\right).
\end{align*} 
\end{thm}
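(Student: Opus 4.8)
The plan is to follow the same $q$-Borel–Laplace strategy used for the $q$-Airy connection formula in Section 3, but now with the transformations of the \emph{first} kind, $\mathcal{B}_q^+$ and $\mathcal{L}_q^+$, since the series ${}_2\varphi_0(0,0;-;q,\cdot)$ is divergent. First I would start from the Ramanujan equation \eqref{ram}, whose second (divergent) solution is $\theta_q(x)\,{}_2\varphi_0(0,0;-;q,-x/q)$ by the Remark. Writing this solution as $\theta_q(x)f(x)$ with $f(x)=\sum_{n\ge 0}a_n(-x/q)^n$, I would identify the $q$-difference equation satisfied by $f$ and apply $\mathcal{B}_q^+$ to it, using the operational rule for $\mathcal{B}_q^+$ under multiplication by $x^m$ and $\sigma_q^l$ (the first-kind analogue of Lemma \ref{orqb}). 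Because the divergence is ``mild'' (the $q^{n(n-1)/2}$ in the denominator cancels against the $q^{\frac{n(n-1)}{2}}$ inserted by $\mathcal{B}_q^+$), the Borel transform $\varphi(\xi)=(\mathcal{B}_q^+f)(\xi)$ should be an honest convergent series; in fact I expect it to be (a rescaling of) $e_q$, i.e.\ a ${}_1\varphi_0(0;-;q,\cdot)$, up to a change of variable $\xi \mapsto$ const$\cdot \xi$.

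Next I would compute the $q$-Laplace transform of the first kind, $\mathcal{L}_q^+\varphi$, as the bilateral $q$-integral $\frac{1}{1-q}\int_0^{\lambda\infty}\frac{\varphi(\xi)}{\theta_q(\xi/x)}\frac{d_q\xi}{\xi}=\sum_{n\in\mathbb{Z}}\frac{\varphi(\lambda q^n)}{\theta_q(\lambda q^n/x)}$, which depends on the base point $\lambda$. This is where Theorem \ref{eqEq} and especially Lemma \ref{alt} enter: I would substitute the alternate representation of $e_q(x/q)$ — the splitting into the two ${}_0\varphi_1$ series with bases $q$ and $q^3$ — to re-expand $\varphi$ around the infinity, and then the $\theta$-quotient prefactors in Lemma \ref{alt}, together with the quasi-periodicity $\theta_q(q^kx)=q^{-k(k-1)/2}x^{-k}\theta_q(x)$ and the inversion formula, let me recognize the resulting bilateral sum as a combination of theta quotients $\theta_{q^2}(-\lambda^2/(qx))/(\theta_q(-\lambda/q)\theta_q(\lambda/x))$ and $\theta_{q^2}(-\lambda^2/x)/(\theta_q(-\lambda/q)\theta_q(\lambda/x))$ times ${}_1\varphi_1(0;q;q^2,q^2/x)$ and ${}_1\varphi_1(0;q^3;q^2,q^3/x)$ respectively. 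The two ${}_1\varphi_1$ series arise because the even/odd split of the $e_q$-expansion in Lemma \ref{alt} produces two $q^2$-series; relating ${}_0\varphi_1(-;q;q^2,\cdot)$ to ${}_1\varphi_1(0;q;q^2,\cdot)$ is just the identity $(-1)^n q^{n(n-1)/2}$ bookkeeping. Finally, since $\mathcal{L}_q^+\circ\mathcal{B}_q^+ f=f$ on the formal level and both sides define genuine analytic objects off the spiral $[-\lambda;q]$, I conclude $\theta_q(x)f(x)$ equals the claimed right-hand side; multiplying through by $\theta_q(x)$ and recalling that ${}_2f_0(0,0;-;q,\cdot)$ denotes precisely the $q$-Borel–Laplace transform of ${}_2\varphi_0(0,0;-;q,\cdot)$ gives the statement.

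The main obstacle I anticipate is the bookkeeping in the $q$-Laplace step: correctly tracking the $\lambda$-dependence and the various theta factors so that the bilateral sum $\sum_{n\in\mathbb{Z}}$ collapses to the stated closed form. Concretely, after inserting Lemma \ref{alt} one gets a sum of the shape $\sum_{n\in\mathbb{Z}}\frac{\text{(theta factor at }\lambda q^n)}{\theta_q(\lambda q^n/x)}\cdot(\text{a }q^2\text{-series in }q^{5}/(\lambda q^n)^2)$, and one must split the index $n$ according to parity and resum each half using Jacobi's triple product to produce the $\theta_{q^2}$ factors; keeping the powers of $q$ and the signs straight there is the delicate part. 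A secondary point is to justify the exchange of the $q$-integral with the series expansion of $\varphi$ and the convergence of the bilateral series away from $[-\lambda;q]$ — but this is routine given $|q|<1$ and the exponential decay of $1/\theta_q$, and follows the template already established in \cite{Z0,Z2,M0,M1}. Once the resummation identity is in hand, matching it against the right-hand side of the theorem is immediate.
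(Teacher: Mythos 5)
Your proposal follows essentially the same route as the paper's proof: apply $\mathcal{B}_q^+$ to ${}_2\varphi_0(0,0;-;q,-x/q)$ to get $e_q(\xi/q)={}_1\varphi_0(0;-;q,\xi/q)$, rewrite it via Lemma \ref{alt}, apply $\mathcal{L}_q^+$ along the $\lambda$-spiral, and resum the bilateral sums into the $\theta_{q^2}$-quotients times the two ${}_1\varphi_1$ series. The only correction to your anticipated "delicate step": no parity split of the bilateral index and no triple product are needed there — after using the quasi-periodicity of $\theta_q$ and shifting the index $n\mapsto n-m$, the bilateral sum is exactly $\theta_{q^2}\left(-\frac{\lambda^2}{qx}\right)$ (resp.\ $\theta_{q^2}\left(-\frac{\lambda^2}{x}\right)$) by the definition of the theta function, the even/odd splitting having already been carried out once and for all in Lemma \ref{alt}.
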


We define the $q$-Borel-Laplace transformations of the first kind to obtain the connection formula between the origin and the infinity. 
\begin{defi}For any analytic function f(x), the $q$-Borel transformation of the 
first kind $\mathcal{B}_q^+$ is
\[\left(\mathcal{B}_q^+f\right)(\xi ):=\sum_{n\ge 0}a_nq^{\frac{n(n-1)}{2}}\xi^n=:\varphi (\xi ),\]
the $q$-Laplace transformation of the first kind $\mathcal{L}_q^+$ is
\[\left(\mathcal{L}_q^+\varphi\right)(x):=\sum_{n\in\mathbb{Z}}\frac{\varphi (\lambda q^n)}{\theta_q\left(\frac{\lambda q^n}{x}\right)}.\]
\end{defi}
We remark that the $q$-Borel transformation $\mathcal{B}_q^+$ is formal inverse of the $q$-Laplace transformation $\mathcal{L}_q^+$ as follows;
\begin{lem}For any entire function $f(x)$, we have 
\[\mathcal{L}_q^+\circ\mathcal{B}_q^+f=f.\]
\end{lem}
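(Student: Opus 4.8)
The plan is to establish the inversion identity by linearity, reducing it to the action on monomials, where it becomes a short computation built entirely from the functional equation of $\theta_q$. Since both $\mathcal{B}_q^+$ and $\mathcal{L}_q^+$ are linear and an entire $f(x)=\sum_{n\ge 0}a_nx^n$ has infinite radius of convergence, it suffices to verify $\mathcal{L}_q^+\circ\mathcal{B}_q^+(x^m)=x^m$ for each $m\ge 0$ and then recombine. For $f(x)=x^m$ the Borel transform is immediate, $\left(\mathcal{B}_q^+f\right)(\xi)=q^{m(m-1)/2}\xi^m$, so the entire problem reduces to evaluating the bilateral series
\[
\left(\mathcal{L}_q^+\mathcal{B}_q^+f\right)(x)=q^{\frac{m(m-1)}{2}}\lambda^m\sum_{n\in\mathbb{Z}}\frac{q^{nm}}{\theta_q\!\left(\lambda q^n/x\right)}.
\]

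The core step is to clear the $n$-dependence from the denominator using the quasi-periodicity of the theta function, namely $\theta_q(q^n y)=q^{-n(n-1)/2}y^{-n}\theta_q(y)$ with $y=\lambda/x$. After this substitution each summand acquires a factor $q^{n(n-1)/2}(\lambda/x)^n$ in the numerator, and the bilateral sum collapses to $\sum_{n\in\mathbb{Z}}q^{n(n-1)/2}(q^m\lambda/x)^n$, which I recognize directly from the series definition of $\theta_q$ as $\theta_q(q^m\lambda/x)$. A second application of the same functional equation rewrites $\theta_q(q^m\lambda/x)=q^{-m(m-1)/2}(\lambda/x)^{-m}\theta_q(\lambda/x)$, and then all the theta factors, together with the powers of $q$ and $\lambda$, cancel to leave exactly $x^m$. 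It is worth recording as a consistency check that the auxiliary parameter $\lambda$ disappears from the final answer, as it must for the statement to be well posed.

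The one genuinely technical point, and the step I expect to be the main obstacle, is justifying the interchange of the bilateral sum $\sum_{n\in\mathbb{Z}}$ with the power series $\sum_m a_m$ when passing from monomials back to a general entire $f$. Here I would use that $f$ entire forces $\limsup_m|a_m|^{1/m}=0$, so that the weights $q^{m(m-1)/2}$ make $\varphi=\mathcal{B}_q^+f$ an entire function of rapid decay, while the functional equation of $\theta_q$ shows that $1/\theta_q(\lambda q^n/x)$ decays essentially like $q^{n(n-1)/2}$ (up to an algebraic factor) as $|n|\to\infty$ in both directions. Together these yield absolute and locally uniform convergence of the double series on compact subsets of $\mathbb{C}^*\setminus[-\lambda;q]$, away from the spiral where the denominators $\theta_q(\lambda q^n/x)$ vanish, and this absolute convergence licenses Fubini's theorem and hence the term-by-term reduction to the monomial identity. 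The monomial identity itself, by contrast, is purely formal once the two theta transformations are in place.
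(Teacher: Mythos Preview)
Your argument is correct. The paper itself does not supply a proof of this lemma: it is stated as a remark without justification, in parallel with the second-kind inversion lemma earlier, for which only the one-line hint ``calculating residues of the $q$-Laplace transformation around the origin'' is given. So there is no approach in the paper to compare against.

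Your route---reducing by linearity to monomials, using the quasi-periodicity $\theta_q(q^ny)=q^{-n(n-1)/2}y^{-n}\theta_q(y)$ to collapse the bilateral sum to $\theta_q(q^m\lambda/x)/\theta_q(\lambda/x)$, and applying the same relation once more to obtain $x^m$---is the standard and cleanest verification of this identity. Your convergence discussion (entirety of $f$ forces $\mathcal{B}_q^+f$ entire, while $1/\theta_q(\lambda q^n/x)$ has Gaussian-in-$n$ decay off the spiral $[-\lambda;q]$) is exactly what is needed to justify the termwise passage, and your observation that the result is independent of $\lambda$ is the right sanity check.
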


We give the proof of theorem \ref{divergentramanujan}.
\begin{proof}
We apply the $q$-Borel transformation $\mathcal{B}_q^+$ to the divergent 
series $v(x)={}_2\varphi_0(0,0;-;q,-x/q)$. We obtain 
\[\left(\mathcal{B}_q^+v\right)(\xi )={}_1\varphi_0\left(0;-;q,\frac{\xi}{q}\right)=:\varphi (\xi ).\]
From lemma \ref{alt}, 
\[\varphi (\xi )=\frac{(q;q)_\infty}{\theta_q\left(-\frac{\xi}{q}\right)}
{}_0\varphi_1\left(-;q;q^2,\frac{q^5}{\xi^2}\right)-\frac{(q;q)_\infty}{\theta_q\left(-\frac{\xi}{q}\right)}
\frac{q^2}{(1-q)\xi}{}_0\varphi_1\left(-;q^3;q^2,\frac{q^7}{\xi^2}\right)\]
where $|\xi /q|<1$. 

We apply the $q$-Laplace transformation $\mathcal{L}_q^+$ to $\varphi (\xi )$:
\begin{align*}
&\left(\mathcal{L}_q^+\varphi\right)(x)=\sum_{n\in\mathbb{Z}}\frac{\varphi (\lambda q^n)}{\theta_q\left(\frac{\lambda q^n}{x}\right)}
=\sum_{n\in\mathbb{Z}}\frac{{}_1\varphi_0\left(0;-;q,\frac{\lambda q^n}{q}\right)}{\theta_q\left(\frac{\lambda q^n}{x}\right)}\\
&=\frac{(q;q)_\infty}{\theta_q\left(-\frac{\lambda}{q}\right)\theta_q\left(\frac{\lambda}{x}\right)}\sum_{n-m\in\mathbb{Z}}(q^2)^{\frac{(n-m)(n-m-1)}{2}}\left(-\frac{\lambda^2}{qx}\right)^{n-m}\\
&\qquad\qquad\qquad\qquad \times \sum_{m\ge 0}\frac{(-1)^m(q^2)^{\frac{m(m-1)}{2}}}{(q;q^2;q^2)_m}\left(\frac{q^2}{x}\right)^m\\
&-\frac{(q;q)_\infty}{\theta_q\left(-\frac{\lambda}{q}\right)\theta_q\left(\frac{\lambda}{x}\right)}\frac{q^2}{(1-q)\lambda}\sum_{n-m\in\mathbb{Z}}(q^2)^{\frac{(n-m)(n-m-1)}{2}}\left(-\frac{\lambda^2}{q^2x}\right)^{n-m}\\
&\qquad\qquad\qquad\qquad \times
\sum_{m\ge 0}\frac{(-1)^m(q^2)^{\frac{m(m-1)}{2}}}{(q^3,q^2;q^2)_m}\left(\frac{q^3}{x}\right)^m.
\end{align*}
Therefore, 
\begin{align*}
&{}_2f_0\left(0,0;-;q,-\frac{x}{q}\right)
=\mathcal{L}_q^+\circ\mathcal{B}_q^+{}_2\varphi_0\left(0,0;-;q,-\frac{x}{q}\right)\\
&=(q;q)_\infty\frac{\theta_{q^2}\left(-\frac{\lambda^2}{qx}\right)}{\theta_q\left(-\frac{\lambda}{q}\right)\theta_q\left(\frac{\lambda}{x}\right)}{}_1\varphi_1\left(0;q;q^2,\frac{q^2}{x}\right)
+\frac{(q;q)_\infty}{1-q}\frac{\theta_{q^2}\left(-\frac{\lambda^2}{x}\right)}{\theta_q\left(-\frac{\lambda}{q}\right)\theta_q\left(\frac{\lambda}{x}\right)}
{}_1\varphi_1\left(0;q^3;q^2,\frac{q^3}{x}\right).
\end{align*}
We obtain the conclusion.
\end{proof}

\section*{Acknowledgement}
The author would like to thank Professor Yousuke Ohyama whose comments and suggestions were innumerably valuable throughout the course of this study.


\begin{thebibliography}{99}



\bibitem{Birkhoff}
G.~D.~Birkhoff, The generalized Riemann problem for linear differential
equations and the allied problems for linear difference and $q$-difference
equations, Proc. Am. Acad. Arts and Sciences, \textbf{49} ($1914$), $521-568$.



\bibitem{GR}
G.~Gasper and M.~Rahman, Basic Hypergeometric Series, 2nd ed,
 Cambridge, 2004.

\bibitem{Gauss13}
C.~F.~Gauss, Disquisitiones generales circa seriem infinitam \dots , \textit{Comm. soc. reg. sci. G\"{o}tt. rec. }, Vol. II; reprinted in \textit{Werke} \textbf{3} (1876), 123-162. 


\bibitem{hama}
T.~Hamamoto, K.~Kajiwara, N.~S.~Witte, 
Hypergeometric solutions to the $q$-Painlev\'e equation of type 
$(A_1+A_1')^{(1)}$, Int. Math. Res. Not. Vol. 2006, Article ID 84619, Pages $1-26$.

\bibitem{hein}
Heine.~E. Untersuchungen \"{u}ber die Reihe..., \textit{J. reine angew. Math.} \textbf{34}, 285-328.

\bibitem{Is}
M.~E.~H.~Ismail,
 Asymptotics of $q$-Orthogonal Polynomials
and a $q$-Airy Function, Int. Math. Res. Not. (2005), No. { 18} 1063--1088.


\bibitem{IZ}
 M.~E.~H.~Ismail and C.~Zhang, Zeros of entire functions and a problem of Ramanujan, Adv. Math. \textbf{209} (2007), 363--380




\bibitem{KMNOY}
K.~Kajiwara, T.~Masuda, M.~Noumi, Y.~Ohta, and Y.~Yamada, 
Hypergeometric solutions to the $q$-Painlev\'e equations, Int. Math. Res. Not.  (2004), no. 47, 2497--2521.




\bibitem{M0}
T.~Morita, 
A connection formula of the Hahn-Exton $q$-Bessel Function, SIGMA,\textbf{7} (2011), 115, 11pp. 

\bibitem{M1}
T.~Morita, 
A connection formula of the $q$-confluent hypergeometric function,
{\ttfamily arXiv:1105.5770}.



\bibitem{Ohyama}
Y.~Ohyama, 
A unified approach to $q$-special functions of the Laplace type,
{\ttfamily arXiv:1103.5232}.


\bibitem{Ramanujan}
S.~Ramanujan, The Lost Notebook and Other Unpublished Papers (with an introduction by G.~E.~Andrews), Narosa, New Delhi, 1988.

\bibitem{sauloy}
J.~Sauloy, 
Algebraic construction of the Stokes sheaf for irregular linear $q$-difference equations, {\ttfamily  arXiv:math/0409393}





\bibitem{W}
G.~N.~Watson,  The continuation of functions defined by generalized
hypergeometric series, {Trans. Camb. Phil. Soc.} {\bf 21} (1910),  281--299.


\bibitem{Z0}
C.~Zhang, Remarks on some basic hypergeometric series, 
in ``Theory and Applications of Special Functions", 
Springer (2005), 479--491. 

\bibitem{Z1}
C.~Zhang, Sur les fonctions $q$-Bessel de Jackson, 
  J. Approx. Theory, {\bf 122}  (2003),  208--223.  

\bibitem{Z2}
C.~Zhang, Une sommation discr\`e pour des \'equations aux $q$-diff\'erences
lin\'eaires et \`a  coefficients, analytiques: th\'eorie g\'en\'erale et exemples,
in ``Differential Equations and Stokes Phenomenon'', World Scientific (2002),
309--329.

\end{thebibliography}
\end{document}